\PassOptionsToPackage{table}{xcolor}
\documentclass[11pt]{article} 
\usepackage{fullpage}

\usepackage{times}
\usepackage{tikz}
\usetikzlibrary{arrows.meta, positioning, calc}
\usepackage{mathtools}
\usepackage{amsthm, amsmath, amssymb}
\usepackage{dsfont}
\usepackage{verbatim}
\usepackage{enumitem}
\usepackage{xspace}
\usepackage{todonotes}
\usepackage{amsfonts,mathrsfs,color}
\usepackage{xcolor}
\usepackage{graphicx}
\usepackage{booktabs}
\usepackage{nicefrac}
\usepackage{mdframed}
\usepackage{contour}
\contourlength{0.1pt}
\contournumber{10}
\usepackage[
  backend=biber,
  style=alphabetic,
  maxbibnames=10,
  maxalphanames=3,
  minalphanames=3,
  natbib=true,
  url=false
]{biblatex}
\usepackage{multirow}
\usepackage[ruled,linesnumbered,noend]{algorithm2e} 
\SetKwComment{Comment}{/* }{ */}
\SetKwProg{Fn}{function}
\SetAlFnt{\relax}
\SetAlCapFnt{\relax}
\SetAlCapNameFnt{\relax}
\SetAlCapHSkip{0pt}
\usepackage{tikz}
\usepackage{xcolor}

\newcommand{\declarecolor}[2]{\definecolor{#1}{RGB}{#2}\expandafter\newcommand\csname #1\endcsname[1]{\textcolor{#1}{##1}}}
\declarecolor{White}{255, 255, 255}
\declarecolor{Black}{0, 0, 0}
\declarecolor{Maroon}{128, 0, 0}
\declarecolor{Coral}{255, 127, 80}
\declarecolor{Red}{182, 21, 21}
\declarecolor{LimeGreen}{50, 205, 50}
\declarecolor{DarkGreen}{0, 80, 0}
\declarecolor{Purple}{146, 42, 158}
\declarecolor{Navy}{0, 0, 128}
\declarecolor{LightBlue}{84, 101, 202}
\usepackage{hyperref}
\hypersetup{
    colorlinks,
    citecolor=DarkGreen,
    linkcolor=LightBlue}
\usepackage[noabbrev, capitalise, nameinlink]{cleveref}

\newtheorem{theorem}{Theorem}
\newtheorem*{theorem*}{Theorem}
\newtheorem{lemma}{Lemma}

\newtheorem{example}{Example}

\newtheorem{proposition}{Proposition}

\usepackage{pifont}

\def\+#1{\mathcal{#1}}
\def\-#1{\mathbb{#1}}

\newcommand{\notshow}[1]{{}}
\newcommand{\AutoAdjust}[3]{{ \mathchoice{ \left #1 #2  \right #3}{#1 #2 #3}{#1 #2 #3}{#1 #2 #3} }}
\newcommand{\Xcomment}[1]{{}}

\newcommand{\InParentheses}[1]{\AutoAdjust{(}{#1}{)}}
\newcommand{\InBrackets}[1]{\AutoAdjust{[}{#1}{]}}

\newcommand{\InNorms}[1]{\AutoAdjust{\|}{#1}{\|}}
\renewcommand{\part}[2]{\frac{\partial #1}{\partial #2}}

\newcommand{\R}{\mathds{R}}

\newcommand{\prox}{\mathbf{Prox}}

\newcommand{\Id}{\mathrm{Id}}
\newcommand{\zer}{\operatorname{zer}}

\newcommand{\rtan}{r^{\mathrm{tan}}}
\newcommand{\rnat}{r^{\mathrm{nat}}}
\newcommand{\ip}[2]{\left\langle #1, #2 \right\rangle}
\newcommand{\norm}[1]{\left\lVert #1 \right\rVert}
\newcommand{\set}[1]{\left\{ #1 \right\}}

\title{Last-Iterate Convergence of Anchored Gradient Descent}
\author{Yang Cai\thanks{Authors are alphabetically ordered.} \\ Yale University\\ yang.cai@yale.edu
\and Weiqiang Zheng\footnotemark[1] \\ Yale University \\ weiqiang.zheng@yale.edu}

\begin{document}

\maketitle

\begin{abstract}%
We study the monotone inclusion problem $0\in F(z)+A(z)$, where $F$ is monotone and Lipschitz, and $A$ is maximally monotone, a framework that encompasses monotone variational inequalities and convex-concave saddle-point problems with constraints or regularization. It is well known that vanilla gradient descent diverges for this problem, whereas optimism-based methods such as Extragradient and accelerated methods that combine both optimism and anchoring, such as Extra Anchored Gradient, achieve last-iterate convergence. However, the anchoring-only method, anchored gradient descent, has been studied only in the unconstrained setting~\citep{ryu2019ode, surina2026anchored}. In this note, we extend the anchored gradient descent method to the monotone inclusion problem and prove a last-iterate convergence rate of $O(1/\sqrt{T})$ in terms of the tangent residual. We build on the recent proof in the unconstrained setting~\citep{surina2026anchored} and use techniques from~\citep{cai2024accelerated} to extend it to the general inclusion setting. 
\end{abstract}

\setcounter{tocdepth}{3}
\tableofcontents
\addtocounter{page}{-1}
\thispagestyle{empty}

\newpage
\section{Introduction}
In this paper, we focus on the monotone inclusion problem
\begin{equation}\label{eq:intro-mi}
0\in F(z)+A(z),
\end{equation}
where $F:\R^d\to\R^d$ is a single-valued Lipschitz and monotone operator and $A:\R^d\rightrightarrows \R^d$ is maximally monotone. The monotone inclusion problem is general and important in optimization, machine learning, and game theory. The monotone inclusion problem covers the monotone variational inequality problem and the convex-concave min-max optimization problem as special cases.
\begin{example}[Monotone variational inequality]\label{ex:vi}
Let $\+Z\subseteq \R^d$ be closed and convex, and $A=N_{\+Z}$ be the normal cone operator for $\+Z$. Then $z^\star \in \+Z$ solves
\[
0\in F(z^\star)+N_{\+Z}(z^\star)
\]
if and only if 
\[
\ip{F(z^\star)}{z-z^\star}\ge 0\qquad \forall z\in \+Z.
\]
Thus monotone inclusion recovers the standard (Stampacchia) variational inequality problem.
\end{example}

\begin{example}[Convex-concave min-max problems]\label{ex:minmax}
Let $\Phi:\R^n\times \R^m\to \R$ be continuously differentiable, convex in $x$, and concave in $y$. Let $g:\R^n\to (-\infty,+\infty]$ and $h:\R^m\to (-\infty,+\infty]$ be proper closed convex functions, and consider
\[
\min_{x\in \R^n}\max_{y\in \R^m}\; \Phi(x,y)+g(x)-h(y).
\]
Define $z=(x,y)$,
\[
F(z):=\bigl(\nabla_x\Phi(x,y),-\nabla_y\Phi(x,y)\bigr),
\qquad
A(z):=\partial g(x)\times \partial h(y).
\]
Then $F$ is monotone by convexity-concavity, and $(x^\star,y^\star)$ is a saddle point if and only if
\[
0\in F(x^\star,y^\star)+A(x^\star,y^\star).
\]
The special choice $g=I_{\+X}$ and $h=I_{\+Y}$ gives the constrained saddle problem over $\+X\times \+Y$.
\end{example}
We measure the optimality using the tangent residual~\citep{cai2022finite}:
\begin{equation*}
    \rtan_{F,A}(z):= \mathrm{Distance}(0, F(z) + A(z)) = \inf_{c\in A(z)}\norm{F(z)+c}.
\end{equation*}
The tangent residual upper bounds the natural residual and implies bounds on the gap function. In the unconstrained setting $A = 0$, the tangent residual is just the operator norm $\InNorms{F(z)}$.

\paragraph{Gradient Descent} Simultaneous gradient descent (GD) algorithm is a fundamental first-order method (also known as the forward-backward splitting algorithm) for monotone inclusion problems
\begin{equation}\label{GD}\tag{GD}
    \begin{aligned}
        z_{t+1}=J_{\alpha_t A}[z_t-\alpha_tF(z_t)].
    \end{aligned}
\end{equation}
Unfortunately, the vanilla GD algorithm is well-known to oscillate and fail to converge even on very simple bilinear min-max problems~\citep{daskalakis2018limit}. 

\paragraph{Remedies for Last-Iterate Convergence: Optimism and Anchoring} There are two remedies of vanilla GD that ensure last-iterate convergence: extragradient-type optimism and anchoring. 

The extragradient (EG) method~\citep{korpelevich1976extragradient} uses a look-ahead point to correct the gradient update:
\begin{equation}\label{EG}\tag{EG}
    \begin{aligned}
    z_{t+\frac{1}{2}} &= J_{\alpha_t A}[z_t - \alpha_t F(z_t)],
    \\
    z_{t+1} &= J_{\alpha_t A}[z_t - \alpha_t F(z_{t+\frac{1}{2}})].
\end{aligned}
\end{equation}
The EG method has $O(\frac{1}{\sqrt{T}})$ last-iterate convergence rate in terms of the tangent residual for constrained monotone variational inequalities (\Cref{ex:vi}, $A = N_{\+Z}$), proved by \citet{cai2022finite}. Their analysis can be extended to the case where $A = \partial g$ is the subdifferential operator of a convex function. We note that the same convergence rate also holds for the slightly more general case where $A$ is 3-cyclically monotone~\citep{tran2026revisiting}, which covers the above two cases. The EG algorithm has been studied in the unconstrained setting~\citep{golowich2020last, gorbunov2022extragradient}. The optimistic gradient descent ascent (OGDA) algorithm~\citep{popov1980modification} also uses the optimism mechanism and is similar to EG. OGDA has $O(1/\sqrt{T})$ last-iterate convergence rate, first proved in the unconstrained setting~\citep{golowich2020tight} and then the constrained setting~\citep{cai2022finite, gorbunov2022last}.

Another mechanism is anchoring, first proposed in Halpern iteration~\citep{halpern1967fixed}, which adds regularization toward a fixed anchor point $z_0$. Applying anchoring to \ref{GD} leads to the following anchored GD \eqref{AGD} method, first proposed by ~\citet{ryu2019ode} in the unconstrained setting ($A = 0$):
\begin{equation}\label{AGD}\tag{AGD}
    \begin{aligned}
        z_{t+1}=z_t-\alpha_tF(z_t)+\beta_t(z_0-z_t).
    \end{aligned}
\end{equation}
\citet{ryu2019ode} propose a series of $\{\alpha_t, \beta_t\}$ that achieve last-iterate convergence rate of $\InNorms{F(z_T)} = O(T^{-\frac{1}{2}+p})$ for any $p \in (0, \frac{1}{2})$. Importantly, this leaves open whether $O(\frac{1}{\sqrt{T}})$ last-iterate convergence rate is possible. A very recent paper by \citet{surina2026anchored} prove an improved $\InNorms{F(z_T)} = O(\frac{1}{\sqrt{T}})$ last-iterate convergence rate for \ref{AGD} under the schedule
\begin{align}\label{eq:step size}
    \alpha_t=\frac{1}{L\sqrt{t+\gamma}},
    \qquad
    \beta_t=\frac{\gamma}{t+\gamma},
    \qquad \gamma\ge 2.
\end{align}

However, all these results hold only in the unconstrained setting where $A = 0$. It remains unknown whether the \ref{AGD} can be generalized to general monotone inclusion in the composite setting. \citet{ryu2019ode} explicitly mentions this question: ``\textit{Generalizing the results of this work to accommodate projections and proximal operators, analogous to projected and proximal gradient methods, is an interesting direction of future work.}" 

\subsection{Our Results}
In this note, we answer the question by \citep{ryu2019ode} in the affirmative. 

\paragraph{Proximal Anchored Gradient Descent.} We propose the proximal anchored gradient descent method \eqref{P-AGD} for the composite monotone inclusion problem \eqref{eq:intro-mi}:
\begin{equation}\label{P-AGD}\tag{P-AGD}
\begin{aligned}
    z_{t+1} &= J_{\alpha_t A} \InBrackets{z_t - \alpha_t F(z_t) + \beta_t(z_0-z_t)}.
\end{aligned}
\end{equation}
where $J_{\alpha_t A}$ is the resolvent operator (see \Cref{sec:resolvents} for definition) that covers both projections and proximal operators. We use the same schedule in \eqref{eq:step size} as \citet{surina2026anchored}:

We show that \ref{P-AGD} achieves the $O(1/\sqrt{T})$ last-iterate convergence rate for the tangent residual for the composite monotone inclusion problem \eqref{eq:intro-mi}.
\begin{theorem}\label{thm:main-intro}
    Let $F$ be $L$-Lipschitz and monotone, $A$ be maximally monotone, and $z^\star$ be any solution to the monotone inclusion problem. Let $z_0\in\R^d$ and $\gamma\ge 2$. Then for the sequence generated by \ref{P-AGD} with step sizes \eqref{eq:step size}, we have for all $T \ge 1$,
    \begin{align*}
        \rtan_{F,A}(z_T) = \inf_{c\in A(z_{T})}\norm{F(z_T)+c} \le \frac{25(\sqrt{12}+1)L\gamma\InNorms{z_0 - z^\star}}{\sqrt{T-1+\gamma}} = O\InParentheses{\frac{L \InNorms{z_0-z^\star}}{\sqrt{T}}}.
    \end{align*}
\end{theorem}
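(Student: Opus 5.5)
Our plan is to pass from the inclusion to an ``effective operator'' at each iterate and then rerun the potential-function argument of \citet{surina2026anchored}. From the resolvent step we have
\[
z_t-\alpha_tF(z_t)+\beta_t(z_0-z_t)-z_{t+1}\in\alpha_tA(z_{t+1}),
\]
so we set $c_{t+1}:=\frac{1}{\alpha_t}\bigl(z_t-z_{t+1}-\alpha_tF(z_t)+\beta_t(z_0-z_t)\bigr)\in A(z_{t+1})$. We then define $G_{t+1}:=F(z_{t+1})+c_{t+1}$, whose norm upper bounds $\rtan_{F,A}(z_{t+1})$. With this notation the update reads
\[
z_{t+1}=z_t-\alpha_t\bigl(F(z_t)+c_{t+1}\bigr)+\beta_t(z_0-z_t),
\]
which is exactly \ref{AGD} with the ``gradient'' $F(z_t)+c_{t+1}=G_{t+1}-(F(z_{t+1})-F(z_t))$. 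Thus P-AGD is an implicit/explicit hybrid, following the technique of \citep{cai2024accelerated}: $A$ is treated implicitly, and $F$ explicitly with an $L\|z_{t+1}-z_t\|$ error.

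The key steps, in order, are as follows.

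\textbf{Step 1 (boundedness).} We show $\|z_t-z^\star\|\le\|z_0-z^\star\|$. To do this we write $z_{t+1}=J_{\alpha_tA}[(1-\beta_t)(z_t-\frac{\alpha_t}{1-\beta_t}F(z_t))+\beta_tz_0]$ and use firm nonexpansiveness of the resolvent together with $z^\star=J_{\alpha_tA}[z^\star-\alpha_tF(z^\star)]$. Monotonicity plus Lipschitzness makes $I-\alpha F$ nonexpansive-ish only up to a factor $1+\alpha^2L^2$, so we instead keep the term $\alpha_t^2\|F(z_t)-F(z^\star)\|^2$ and absorb it using $\alpha_t^2L^2=1/(t+\gamma)\le\beta_t/\gamma$. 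Where pure boundedness fails we will settle for $\|z_t-z^\star\|\le C\|z_0-z^\star\|$. This also bounds $\|z_0-z_t\|$ and, via the update, $\|z_{t+1}-z_t\|\lesssim\alpha_t\|G_t\|+\beta_t D$ with $D=\|z_0-z^\star\|$.

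\textbf{Step 2 (potential).} We mimic the Lyapunov function from \citet{surina2026anchored}, of the form
\[
V_t=a_t\|G_t\|^2+b_t\langle G_t,z_t-z_0\rangle,\qquad a_t\sim t\cdot\text{const}/L^2\ \text{(scaled by }\sqrt{t}\text{)},\ b_t\sim t\cdot\text{const}.
\]
To show $V_{t+1}\le V_t+\text{(summable or }O(D^2)\text{ error)}$ we combine two inequalities. The first is monotonicity of $F+A$ between $z_t$ and $z_{t+1}$, namely $\langle G_{t+1}-G_t,z_{t+1}-z_t\rangle\ge0$; this is the replacement for monotonicity of $F$ alone and is valid because $c_t\in A(z_t)$ and $c_{t+1}\in A(z_{t+1})$. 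The second is the Lipschitz bound $\|F(z_{t+1})-F(z_t)\|\le L\|z_{t+1}-z_t\|$. These are exactly the two facts used in the unconstrained proof once $F(z_t)$ is replaced by $G_t$ and the update direction by $G_{t+1}-(F(z_{t+1})-F(z_t))$.

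\textbf{Step 3 (conclusion).} Telescoping gives $V_T\le V_1+O(\gamma^2D^2)$. A lower bound $V_T\ge a_T\|G_T\|^2-b_T\|G_T\|\,\|z_T-z_0\|$, combined with Step 1, yields a quadratic inequality in $\|G_T\|$ with solution $\|G_T\|\lesssim L\gamma D/\sqrt{T-1+\gamma}$. The base term is handled with $\|G_1\|$, bounded via $c_1$ and $\|z_1-z_0\|$.

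The main obstacle is Step 2. In the unconstrained case the relation between $z_{t+1}-z_t$ and the evaluated operator is exact, whereas here the update uses $F(z_t)$ but $c_{t+1}$ lives at $z_{t+1}$. This produces a cross term $\alpha_t\langle F(z_{t+1})-F(z_t),\cdot\rangle$ that must be bounded by $\alpha_tL\|z_{t+1}-z_t\|$ and absorbed into the negative $\|z_{t+1}-z_t\|^2$ terms. Absorption requires $\alpha_tL=1/\sqrt{t+\gamma}\le 1/\sqrt2$, which is where $\gamma\ge2$ enters. We first try to get this by expanding $\|z_{t+1}-z_t\|^2$ from the update identity and choosing the potential coefficients so that the leftover is nonpositive. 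The loose constant $25(\sqrt{12}+1)$ suggests crude Young-inequality splits there are acceptable.
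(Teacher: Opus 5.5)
There is a genuine gap: Step~2 carries the entire proof, and it is only announced. You never fix the potential or derive $V_{t+1}\le V_t+e_t$ with $\sum_t e_t=O(\gamma^2D^2)$. The scalings you sketch also do not fit Step~3. From $a_Tx^2-b_T\norm{z_T-z_0}\,x\le C$ you only get $x\le b_T\norm{z_T-z_0}/a_T+\sqrt{C/a_T}$, so the argument gives the claimed rate only if $b_T/a_T=O(L/\sqrt T)$. That fails if $a_t$ and $b_t$ grow at the same rate, as your sketch suggests.

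More importantly, the missing inequality is exactly where the anchor has to do quantitative work, and your plan misplaces it. You say $\gamma\ge 2$ enters through $\alpha_tL\le 1/\sqrt2$. That condition holds equally for unanchored GD with the same step sizes, which diverges: for $A=0$ and $F(z)=LJz$ with $J^\top=-J$, $J^2=-I$, one gets $\norm{z_t}^2=\frac{t+\gamma}{\gamma}\norm{z_0}^2$. Because $F$ is only monotone and is evaluated explicitly, the forward map $z\mapsto z-\alpha_tF(z)$ can expand distances by $\sqrt{1+\alpha_t^2L^2}\approx 1+\frac{1}{2(t+\gamma)}$. In a potential with $a_t\asymp t/L^2$, a naive accounting of this gives per-step terms $a_{t+1}L^2\norm{d_t}^2$ of order up to $\gamma^2D^2/t$. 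That is the same order as the main terms, and it sums to $\gamma^2D^2\log T$ unless you show the anchor contraction $\beta_t=\gamma/(t+\gamma)$ cancels it with the right constants. So it is not clear that crude Young splits close at $\gamma\ge 2$.

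Two smaller points. Your Step~1 target $\norm{z_t-z^\star}\le\norm{z_0-z^\star}$ already fails at $t=1$, because $\beta_0=1$ (and your rewrite with $\alpha_t/(1-\beta_t)$ is undefined at $t=0$). Also, the stated constant $25(\sqrt{12}+1)$ requires the specific bound $\norm{z_t-z^\star}^2\le 12\norm{z_0-z^\star}^2$, which a $\lesssim$ argument will not deliver.

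The paper uses no potential, and neither does the unconstrained argument you meant to mimic. The missing idea is a contraction for the consecutive differences $d_t=z_{t+1}-z_t$, obtained by reusing $c_{t+1}$ with the \emph{next} step size. Since $c_{t+1}\in A(z_{t+1})$, also $z_{t+1}=J_{\alpha_{t+1}A}(z_{t+1}+\alpha_{t+1}c_{t+1})$. Compare this, via nonexpansiveness, with $z_{t+2}=J_{\alpha_{t+1}A}\bigl((1-\beta_{t+1})z_{t+1}+\beta_{t+1}z_0-\alpha_{t+1}F(z_{t+1})\bigr)$, and substitute $d_t=\beta_t(z_0-z_t)-\alpha_t(F(z_t)+c_{t+1})$. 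This gives
\[
\norm{d_{t+1}}\le\norm{\lambda_t d_t-\alpha_{t+1}\bigl(F(z_{t+1})-F(z_t)\bigr)}+|\varepsilon_t|\,\norm{z_0-z_t},
\qquad
\lambda_t=\tfrac{\alpha_{t+1}}{\alpha_t}-\beta_{t+1},\quad
\varepsilon_t=\beta_{t+1}-\tfrac{\alpha_{t+1}}{\alpha_t}\beta_t .
\]
Because $\lambda_t\ge 0$, monotonicity of $F$ alone removes the cross term. This leaves the factor $q_t=\sqrt{\lambda_t^2+\alpha_{t+1}^2L^2}\le 1-\frac{9}{8(t+1+\gamma)}$, which is the ``anchor beats expansion'' estimate that needs $\gamma$ large enough (here $\gamma\ge 2$). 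Combine this with $|\varepsilon_t|\le\gamma/(t+\gamma)^2$ and $\norm{z_0-z_t}\le D$, where $D=(\sqrt{12}+1)\norm{z_0-z^\star}$ is the paper's constant. Induction then gives $\norm{d_t}\le 12\gamma D/(t+\gamma)$. The residual is read off the update: $\norm{F(z_T)+c_T}\le (L+1/\alpha_{T-1})\norm{d_{T-1}}+\frac{\beta_{T-1}}{\alpha_{T-1}}\norm{z_0-z_{T-1}}\le \frac{25\gamma LD}{\sqrt{T-1+\gamma}}$. Your $c_{t+1}$/$G_t$ bookkeeping is compatible with this route. Without this contraction, or a fully specified potential that reproduces it, the proposal does not establish the theorem.
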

A few remarks are in order.

\paragraph{Optimal Convergence Rate.} We note the $O(\frac{1}{\sqrt{T}})$ is not the optimal convergence rate for monotone inclusion problems. By combining optimism and anchoring, we can achieve an accelerated, optimal convergence rate of $O(\frac{1}{T})$. This is first proved in the unconstrained setting for the Extra Anchored Gradient (EAG) algorithm by \citet{yoon2021accelerated} and the Fast ExtraGradient (FEG) algorithm by~\citet{lee2021fast}. \citet{yoon2021accelerated} also prove a matching lower bound of $\Omega(\frac{1}{T})$.
The optimal $O(\frac{1}{T})$ last-iterate convergence rate for the general monotone inclusion problem is first achieved by~\citet{cai2024accelerated} through the composite generalization of the EAG and the FEG algorithms, and concurrently by~\citet{kovalev2022first} by the composite generalization of EAG. Together with the last-iterate convergence rates for \ref{EG} that uses optimism only and our results for \ref{P-AGD} that uses anchoring only, these results illustrate the full landscape of acceleration mechanisms for gradient descent. We summarize this landscape in \Cref{fig:landscape}. Accelerated algorithms combining both optimism and anchoring have also been explored in~\citep{bot2022fast, cai2023accelerated, cai2023doubly, bot2023fast, tran2024halpern}.

\begin{figure}[ht]
\centering
\begin{tikzpicture}[
    alg/.style={
        rectangle, rounded corners=6pt,
        draw=black!80, line width=0.8pt,
        minimum width=5.0cm, minimum height=1.1cm,
        text width=4.8cm, align=center,
        font=\small
    },
    rate/.style={
        font=\footnotesize\itshape, text=black!70
    },
    mechanism/.style={
        font=\footnotesize\bfseries, text=Purple,
        fill=white, inner sep=2pt
    },
    arrowstyle/.style={
        -{Stealth[length=6pt, width=4pt]},
        line width=0.7pt, black!70
    },
    level/.style={
        draw=black!20, dashed, line width=0.5pt
    }
]

\node[alg, fill=red!8] (GD) at (0, 0)
    {\textbf{Gradient Descent}\\[2pt] \ref{GD}};

\node[alg, fill=blue!8] (EG) at (-3.5, -3)
    {\textbf{Extragradient}\\[2pt] \ref{EG}~\citep{korpelevich1976extragradient, cai2022finite}};

\node[alg, fill=orange!8] (AGD) at (3.5, -3)
    {\textbf{Anchored GD}\\[2pt] \ref{AGD}~\citep{ryu2019ode,surina2026anchored}\\ [2pt] \ref{P-AGD} [\textbf{This work}]};

\node[alg, fill=green!8] (EAG) at (0, -6)
    {\textbf{Extra Anchored Gradient}\\[2pt] EAG~\citep{yoon2021accelerated}\\ [2pt] composite-EAG~\citep{cai2024accelerated, kovalev2022first}};

\draw[arrowstyle] (GD.south) -- (EG.north);
\draw[arrowstyle] (GD.south) -- (AGD.north);
\draw[arrowstyle] (EG.south) -- (EAG.north);
\draw[arrowstyle] (AGD.south) -- (EAG.north);

\node[mechanism, text=blue!70!black] at ($(GD.south)!0.5!(EG.north)$) [left=3pt]
    {+ Optimism};
\node[mechanism, text=orange!70!black] at ($(GD.south)!0.5!(AGD.north)$) [right=3pt]
    {+ Anchoring};
\node[mechanism, text=orange!70!black] at ($(EG.south)!0.5!(EAG.north)$) [left=3pt]
    {+ Anchoring};
\node[mechanism, text=blue!70!black] at ($(AGD.south)!0.5!(EAG.north)$) [right=3pt]
    {+ Optimism};

\node[rate, text=red!70!black] at (7.2, 0) {\textbf{Diverges}};
\node[rate, text=blue!50!black] at (7.2, -3) {$O(1/\sqrt{T})$ rate};
\node[rate, text=green!40!black] at (7.2, -6) {$O(1/T)$ rate \textbf{(optimal)}};

\draw[level] (-6, -1.5) -- (8, -1.5);
\draw[level] (-6, -4.5) -- (8, -4.5);

\end{tikzpicture}

\caption{Landscape of last-iterate convergence rate of acceleration mechanisms for gradient descent \eqref{GD} in monotone inclusion. Vanilla \ref{GD} diverges. Adding \emph{optimism} alone yields the Extragradient method (\ref{EG},~\citep{korpelevich1976extragradient}), while adding \emph{anchoring} alone yields Anchored GD (\ref{AGD}, \citep{ryu2019ode}) and proximal anchored GD (\ref{P-AGD}); both achieve a $O(1/\sqrt{T})$ last-iterate rate~\citep{cai2022finite, surina2026anchored} (the result for \ref{EG} holds when $A = \partial g$ is the subdifferential operator of a convex function). Combining both mechanisms yields optimal $O(1/T)$ methods such as the Extra Anchored Gradient (EAG,~\citep{yoon2021accelerated}) and the Fast Extra Gradient (FEG,~\citep{lee2021fast}) and their composite extensions~\citep{cai2024accelerated, kovalev2022first}.}
\label{fig:landscape}
\end{figure}

\paragraph{Proof Idea of \Cref{thm:main-intro}} Our starting point is the proof in the unconstrained setting by \citet{surina2026anchored}. The main observation is that we can use the same idea introduced in \citep{cai2024accelerated} to handle the composite operator $A$ by introducing auxiliary vectors $c_{t+1} \in A(z_{t+1})$ defined as $c_{t+1}:=(z_t -\alpha_t F(z_t) + \beta_t(z_0 -z_t) - z_{t+1})/\alpha_t$. With $c_{t+1}$, the update of \ref{P-AGD} becomes unconstrained: $z_{t+1} = z_t -\alpha_t (F(z_t)+c_{t+1}) + \beta_t(z_0 -z_t)$. Then the proof by \citet{surina2026anchored} in the unconstrained setting ($A = 0$) extends nicely to the general monotone inclusion problem, with properties of the resolvent operator. We upper bound $\InNorms{F(z_t) + c_t}$, which then upper bounds the tangent residual $\rtan_{F,A}(z_t) = \inf_{c\in A(z_t)}\norm{F(z_t)+c}$.

\section{Preliminaries}
We work in the Euclidean space $\R^d$ with norm $\norm{\cdot}$ induced by the inner product $\ip{\cdot}{\cdot}$. Throughout, we study the monotone inclusion problem~\eqref{eq:intro-mi} and assume:
\begin{enumerate}[label=(A\arabic*),leftmargin=2.5em]
    \item $F:\R^d\to\R^d$ is monotone, i.e., $\ip{F(z)-F(w)}{z-w}\ge 0$ for all $z,w\in \R^d$.
    \item $F$ is $L$-Lipschitz, i.e., $\norm{F(z)-F(w)}\le L\norm{z-w}$ for all $z,w\in \R^d$.
    \item $A:\R^d\rightrightarrows \R^d$ is maximally monotone, i.e., $A$ is monotone, and its graph is not a strict subset of another monotone operator.
    \item The solution set $S:=\zer(F+A)=\set{z:0\in F(z)+A(z)}$ is nonempty.
\end{enumerate}

\subsection{Resolvents, Normal Cones, and Prox Operators}\label{sec:resolvents}
For $\alpha>0$, the resolvent of $A$ is
\[
J_{\alpha A}:=(\Id+\alpha A)^{-1}.
\]
The following properties are standard; see, for example, \citep{ryu2022large}.

\begin{proposition}[Facts about the resolvent  operator]\label{prop:resolvent} A set-valued operator $A:\R^d\rightrightarrows \R^d$ satisfyies
\begin{enumerate}[label=(\roman*),leftmargin=2em]
    \item If $A$ is maximally monotone, then $J_{\alpha A}$ is single-valued and non-expansive;
    \item $z=J_{\alpha A}(w)$ if and only if $\dfrac{w-z}{\alpha}\in A(z)$;
    \item if $A=N_{\+Z}$ for a closed convex set $\+Z$, then $A$ is maximally monotone and $J_{\alpha A}=\Pi_{\+Z}$ is projection onto $\+Z$;
    \item if $A=\partial g$ for a proper closed convex function $g$, then $A$ is maximally monotone and $J_{\alpha A}=\prox_{\alpha g}$ is the proximal operator~\citep{parikh2014proximal}.
\end{enumerate}
\end{proposition}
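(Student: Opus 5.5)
The plan is to prove item (ii) directly from the definition, and then use it as the main tool for the other three items. For (ii), I will simply unwind $J_{\alpha A}=(\Id+\alpha A)^{-1}$. The statement $z\in J_{\alpha A}(w)$ means $w\in (\Id+\alpha A)(z)=z+\alpha A(z)$. This is equivalent to $(w-z)/\alpha\in A(z)$. It needs no assumption on $A$ beyond $\alpha>0$, and it is read with ``$z=J_{\alpha A}(w)$'' once single-valuedness is known from (i).

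For (i), the argument has two parts.

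\textbf{Single-valuedness and nonexpansiveness, using only monotonicity.} Suppose $z=J_{\alpha A}(w)$ and $z'=J_{\alpha A}(w')$. By (ii), $(w-z)/\alpha\in A(z)$ and $(w'-z')/\alpha\in A(z')$. Monotonicity of $A$ then gives
\[
\ip{(w-z)-(w'-z')}{z-z'}\ge 0,
\]
that is, $\norm{z-z'}^2\le \ip{w-w'}{z-z'}$. This is firm nonexpansiveness. By Cauchy--Schwarz it yields $\norm{z-z'}\le\norm{w-w'}$. Taking $w=w'$ forces $z=z'$, so $J_{\alpha A}$ is at most single-valued.

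\textbf{Full domain, using maximality.} The remaining point is that $J_{\alpha A}$ is defined on all of $\R^d$, i.e.\ $\mathrm{range}(\Id+\alpha A)=\R^d$. This is Minty's theorem applied to the maximally monotone operator $\alpha A$. I expect this to be the main obstacle; it is the only genuinely nontrivial step, and in a note of this kind I would cite it (e.g.\ \citep{ryu2022large}). If a self-contained argument were required, I would proceed as follows. Fix $w$. Solving $w\in z+\alpha A(z)$ amounts to finding $z$ with $\ip{w-z-\alpha a}{z-u}\ge 0$-type conditions for all $(u,a)\in\mathrm{gra}\,A$. A minimax or finite-intersection (Debrunner--Flor) argument produces a point $z$ that is monotonically related to every element of the graph. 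Maximality then places $(z,(w-z)/\alpha)$ in the graph of $A$.

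For (iii) and (iv), I will first establish the special case (iv), and obtain (iii) from it by taking $g=I_{\+Z}$, since $\partial I_{\+Z}=N_{\+Z}$.

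\textbf{Monotonicity of $\partial g$.} Add the two subgradient inequalities $g(u)\ge g(z)+\ip{a}{u-z}$ and $g(z)\ge g(u)+\ip{b}{z-u}$. This gives $\ip{a-b}{z-u}\ge 0$.

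\textbf{Maximality of $\partial g$.} Rather than invoking Rockafellar's theorem, I will use the converse of Minty: a monotone operator whose $\Id+\alpha A$ is surjective is maximal. The argument goes through a test point. Suppose $(u,b)$ is monotonically related to every point of $\mathrm{gra}\,\partial g$. Pick $z$ with $u+b\in z+\partial g(z)$, using surjectivity. Testing monotonicity against this $(z,\,u+b-z)$ gives $-\norm{u-z}^2\ge 0$, so $u=z$ and $b\in\partial g(u)$.

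\textbf{Surjectivity, via the prox problem.} Surjectivity comes from the strongly convex problem
\[
\min_z\; g(z)+\tfrac{1}{2\alpha}\norm{z-w}^2 .
\]
This problem has a unique minimizer because $g$ is proper, closed and convex, and its objective is coercive. Fermat's rule together with the sum rule (valid since the quadratic is finite everywhere) gives the optimality condition $0\in\partial g(z)+(z-w)/\alpha$. By (ii), this says $z=J_{\alpha\partial g}(w)$. Hence $J_{\alpha\partial g}=\prox_{\alpha g}$, which proves (iv).

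\textbf{Projection case.} Specializing to $g=I_{\+Z}$, the prox problem becomes $\min_{z\in\+Z}\norm{z-w}^2$. Its solution is $\Pi_{\+Z}(w)$, which proves (iii).
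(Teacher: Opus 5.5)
Your proof is correct. The paper does not prove this proposition at all; it calls these facts standard and defers to \citep{ryu2022large}. So the difference is that you reconstruct the standard textbook arguments rather than citing them. For the one genuinely nontrivial ingredient, the surjectivity of $\Id+\alpha A$ for maximally monotone $A$ (Minty's theorem), you also fall back on a citation, so on that point the two routes coincide.

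Your version does buy something: it shows exactly where each hypothesis enters.
\begin{itemize}
\item Item (ii) is pure definition-unwinding.
\item Single-valuedness and firm nonexpansiveness in (i) use only monotonicity; maximality is needed only for full domain.
\item For (iii)--(iv), you prove maximality of $\partial g$ rather than quoting it. You get surjectivity of $\Id+\alpha\,\partial g$ from the strongly convex prox problem and then apply the easy converse of Minty. The same minimizer also identifies $J_{\alpha\partial g}$ with $\prox_{\alpha g}$.
\end{itemize}
The citation, in turn, buys brevity, which suits a note where these facts are only tools.

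Two small details should be made explicit. First, coercivity of $g+\frac{1}{2\alpha}\norm{\cdot-w}^2$ relies on a proper closed convex $g$ having an affine minorant. Second, in (iii) you need $\mathcal{Z}\neq\emptyset$, so that $I_{\mathcal{Z}}$ is proper and $\Pi_{\mathcal{Z}}$ is defined.

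Your Debrunner--Flor outline is only a sketch. It can be completed: for each $(u,a)$ in the graph, the set $\{z:\ip{w-z-\alpha a}{z-u}\ge 0\}$ is a nonempty closed ball. Compactness then reduces the claim to nonempty finite intersections. Since you explicitly rely on citing Minty instead, this is not a gap.
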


\subsection{Residuals}
The right convergence measure for the composite problem is the tangent residual
\[
\rtan_{F,A}(z):=\min_{c\in A(z)}\norm{F(z)+c}.
\]
This reduces to $\norm{F(z)}$ when $A\equiv 0$, and it is exactly the condition for approximate stationarity:
\[
\rtan_{F,A}(z)\le \varepsilon
\quad\Longleftrightarrow\quad
0\in F(z)+A(z)+B(0,\varepsilon).
\]
It is also useful to recall the natural residual
\[
\rnat_{F,A}(z):=\norm{z-J_A(z-F(z))}.
\]
The natural residual is always dominated by the tangent residual.

\begin{proposition}\label{prop:nat-tan}
For every $z\in \R^d$,
\[
\rnat_{F,A}(z)\le \rtan_{F,A}(z).
\]
\end{proposition}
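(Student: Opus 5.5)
Fix $z$ and an arbitrary $c\in A(z)$, and put $p:=J_A(z-F(z))$. The plan is to show $\norm{z-p}\le\norm{F(z)+c}$ and then take the infimum over $c\in A(z)$. If $A(z)=\emptyset$, the infimum is $+\infty$ and there is nothing to prove; and when the minimum in the definition of $\rtan_{F,A}$ is not attained, the same infimum argument still applies.

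The key observation is that $z$ is itself the resolvent of a shifted point. Since $c\in A(z)$, we have $\bigl((z+c)-z\bigr)/1=c\in A(z)$, so \Cref{prop:resolvent}(ii) with $\alpha=1$ gives $z=J_A(z+c)$. By \Cref{prop:resolvent}(i), $J_A$ is single-valued and non-expansive because $A$ is maximally monotone. Therefore
\[
\norm{z-p}=\norm{J_A(z+c)-J_A(z-F(z))}\le \norm{(z+c)-(z-F(z))}=\norm{F(z)+c}.
\]
Taking the infimum over $c\in A(z)$ yields $\rnat_{F,A}(z)\le\rtan_{F,A}(z)$.

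The only point needing care is the identification $z=J_A(z+c)$, which is exactly \Cref{prop:resolvent}(ii). No property of $F$ is used, and maximal monotonicity of $A$ enters only through single-valuedness and non-expansiveness of the resolvent.
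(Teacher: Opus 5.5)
Your proposal is correct and follows the paper's proof essentially verbatim: you identify $z=J_A(z+c)$ via \Cref{prop:resolvent}(ii), apply non-expansiveness of $J_A$ to get $\norm{z-J_A(z-F(z))}\le\norm{F(z)+c}$, and then minimize over $c\in A(z)$. Your added remarks about $A(z)=\emptyset$ and about using an infimum rather than an attained minimum are harmless refinements that the paper leaves implicit.
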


\begin{proof}
Fix $z\in \R^d$ and $c\in A(z)$. Since $z=J_A(z+c)$ by Proposition~\ref{prop:resolvent}(ii), the nonexpansiveness of $J_A$ gives
\[
\rnat_{F,A}(z)
=\norm{J_A(z+c)-J_A(z-F(z))}
\le \norm{F(z)+c}.
\]
Taking the minimum over $c\in A(z)$ proves the claim.
\end{proof}

\section{Main Result}
We propose the following generalization of the \ref{AGD} algorithm for monotone inclusion problems \eqref{eq:intro-mi}.
\begin{equation}\label{eq:alg}
z_{t+1}=J_{\alpha_tA}\Bigl((1-\beta_t)z_t+\beta_tz_0-\alpha_tF(z_t)\Bigr),
\qquad t\ge 0,
\end{equation}
with the schedule
\begin{equation}\label{eq:schedule}
\alpha_t=\frac{1}{L\sqrt{t+\gamma}},
\qquad
\beta_t=\frac{\gamma}{t+\gamma},
\qquad \gamma\ge 2.
\end{equation}
When $A=N_{\+Z}$, this is the projected method
\[
z_{t+1}=\Pi_{\+Z}\Bigl((1-\beta_t)z_t+\beta_tz_0-\alpha_tF(z_t)\Bigr),
\]
and when $A=\partial g\times \partial h$ in \Cref{ex:minmax}, it becomes the proximal update
\[
\begin{aligned}
x_{t+1}&=\prox_{\alpha_t g}\Bigl((1-\beta_t)x_t+\beta_t x_0-\alpha_t\nabla_x\Phi(x_t,y_t)\Bigr),\\
y_{t+1}&=\prox_{\alpha_t h}\Bigl((1-\beta_t)y_t+\beta_t y_0+\alpha_t\nabla_y\Phi(x_t,y_t)\Bigr).
\end{aligned}
\]

The key quantity for the proof is the vector
\begin{equation}\label{eq:def-c}
c_{t+1}:=\frac{(1-\beta_t)z_t+\beta_tz_0-\alpha_tF(z_t)-z_{t+1}}{\alpha_t},\quad \forall t \ge 0
\end{equation}
By Proposition~\ref{prop:resolvent}(ii), we have $c_{t+1}\in A(z_{t+1})$. This is the $c$-vector trick of~\citep{cai2024accelerated}.

\begin{theorem}[Last-iterate $O(1/\sqrt{T})$ rate in  tangent residual]\label{thm:main}
Assume \textup{(A1)}--\textup{(A4)}, and let $\{z_t\}_{t\ge 0}$ be generated by \eqref{eq:alg}--\eqref{eq:schedule}. Fix any solution $z^\star\in S$, and define
\[
H_0:=\norm{z_0-z^\star},
\qquad
D:=(\sqrt{12}+1)H_0,
\qquad
E:=\max\!\bigl\{\gamma\norm{z_1-z_0},\,12\gamma D\bigr\}.
\]
Then for every $T\ge 1$,
\begin{equation}\label{eq:main-bound}
\rtan_{F,A}(z_T)
\le
\norm{F(z_T)+c_T}
\le
\frac{L(2E+\gamma D)}{\sqrt{T-1+\gamma}}.
\end{equation}
In particular, since $\norm{z_1-z_0}\le D$, one may take $E=12\gamma D$, and therefore
\begin{equation}\label{eq:main-explicit}
\rtan_{F,A}(z_T)
\le
\frac{25\,\gamma L D}{\sqrt{T-1+\gamma}}
=
O\!\left(\frac{1}{\sqrt{T}}\right).
\end{equation}
Consequently, the same $O(1/\sqrt{T})$ bound holds for the natural residual $\rnat_{F,A}(z_T)$.
\end{theorem}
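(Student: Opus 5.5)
The plan is to rewrite the iteration using the $c$-vectors from \eqref{eq:def-c} and then run a two-stage argument: first a uniform bound on the distance to $z^\star$, then a recursion for successive differences. With $G_t:=F(z_t)+c_t$ and $c_{t+1}\in A(z_{t+1})$, the update reads
\[
z_{t+1}=z_t-\alpha_t\bigl(F(z_t)+c_{t+1}\bigr)+\beta_t(z_0-z_t),
\]
which has the same form as \ref{AGD} except that $c_{t+1}$ is evaluated at the new point. The final bound will then follow from the identity $\alpha_t G_{t+1}=\beta_t(z_0-z_t)-(z_{t+1}-z_t)+\alpha_t\bigl(F(z_{t+1})-F(z_t)\bigr)$. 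Taking norms and using $L$-Lipschitzness gives
\[
\norm{G_{t+1}}\le \frac{\beta_t\norm{z_0-z_t}+(1+\alpha_tL)\norm{z_{t+1}-z_t}}{\alpha_t}.
\]
It therefore suffices to show two things: (a) $\norm{z_t-z_0}\le D$ for all $t$, and (b) $\norm{z_{t+1}-z_t}\le E/(t+\gamma)$. Substituting $\alpha_t=1/(L\sqrt{t+\gamma})$ and $\beta_t=\gamma/(t+\gamma)$, and using $\alpha_tL\le 1$, gives $\norm{G_{t+1}}\le L(\gamma D+2E)/\sqrt{t+\gamma}$, which is \eqref{eq:main-bound} with $T=t+1$. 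The last claim, for the natural residual, then follows from Proposition~\ref{prop:nat-tan}.

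\textbf{Step (a): boundedness.} Write $z_{t+1}$ as the resolvent applied to $(1-\beta_t)z_t+\beta_tz_0-\alpha_tF(z_t)$, and write $z^\star=J_{\alpha_tA}(z^\star-\alpha_t F(z^\star))$. Expanding $\norm{z_{t+1}-z^\star}^2$ and using monotonicity of $A$ (pairing $c_{t+1}$ with $-F(z^\star)\in A(z^\star)$) gives a one-step inequality. Monotonicity of $F$ is applied at $z_{t+1}$, and the mismatch $F(z_t)-F(z_{t+1})$ is absorbed by Lipschitzness together with a $-\norm{z_{t+1}-z_t}^2$ term, which needs $\alpha_t L\le 1$. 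The resulting recursion is roughly
\[
\norm{z_{t+1}-z^\star}^2\le (1-\beta_t)\norm{z_t-z^\star}^2+\beta_t\norm{z_0-z^\star}^2+O\!\left(\alpha_t^2L^2\right)\cdot(\text{error}).
\]
The error terms are summable in the relevant weighted sense because $\alpha_t^2L^2\beta_t^{-1}\sim 1/\gamma$ is constant, so I expect an induction to give $\norm{z_t-z^\star}\le\sqrt{12}\,H_0$. Then $\norm{z_t-z_0}\le(\sqrt{12}+1)H_0=D$. This mirrors the unconstrained argument of \citet{surina2026anchored}; the only change is that the inner products with $c$ are handled by monotonicity of $A$.

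\textbf{Step (b): difference recursion, which I expect to be the main obstacle.} Compare consecutive updates $z_{t+1}=J_{\alpha_tA}(u_t)$ and $z_t=J_{\alpha_{t-1}A}(u_{t-1})$; the resolvents use different step sizes. Following \citet{cai2024accelerated}, I would avoid nonexpansiveness across different resolvents. Instead I would use the monotonicity inequality $\ip{c_{t+1}-c_t}{z_{t+1}-z_t}\ge 0$ directly, writing $c_{t+1}$ and $c_t$ via \eqref{eq:def-c}. Combined with monotonicity of $F$ and Lipschitzness, this gives
\[
\norm{z_{t+1}-z_t}\le (1-\beta_t)\norm{z_t-z_{t-1}}+|\beta_t-\beta_{t-1}|\,\norm{z_0-z_{t-1}}+\Bigl|\tfrac{\alpha_t}{\alpha_{t-1}}-1\Bigr|\cdot\norm{\text{previous displacement}}+\alpha_t^2L^2\cdot(\ldots).
\]
The step-size change term is the delicate one. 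With $\alpha_t/\alpha_{t-1}-1=O(1/t)$ and a displacement $\alpha_{t-1}G_t$ that is only $O(1/t)$ a priori, it is of order $1/t^2$, just like $|\beta_t-\beta_{t-1}|D\le\gamma D/(t+\gamma)^2$. The induction hypothesis $\norm{z_t-z_{t-1}}\le E/(t-1+\gamma)$ then closes: $(1-\gamma/(t+\gamma))E/(t-1+\gamma)+O(\gamma D)/(t+\gamma)^2\le E/(t+\gamma)$, provided $(\gamma-1)E\gtrsim\gamma D\cdot\text{const}$. Since $\gamma\ge2$, this explains the choice $E\ge 12\gamma D$, and the base case is covered by $E\ge\gamma\norm{z_1-z_0}$. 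If the quadratic Lipschitz remainder refuses to fit, I would fall back on squaring and tracking $\norm{z_{t+1}-z_t}^2$ instead, as in the unconstrained proof.

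Finally, for the explicit bound: $z_1=J_{\alpha_0A}(z_0-\alpha_0F(z_0))$, so comparing with $z^\star$ via nonexpansiveness and $\alpha_0L\le1$ should give $\norm{z_1-z_0}\le D$. Hence $E=12\gamma D$ suffices, and $2E+\gamma D=25\gamma D$, which gives \eqref{eq:main-explicit}.
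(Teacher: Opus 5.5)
\textbf{The gap is in Step (b).} Your reduction to (a) and (b) and the final residual computation match the paper. Using $\ip{c_{t+1}-c_t}{z_{t+1}-z_t}\ge 0$ is also fine; it is equivalent to the paper's device of writing $z_t=J_{\alpha_tA}(z_t+\alpha_tc_t)$. The problem is what you do with the step-size change afterwards. You split it off by the triangle inequality as $\bigl|\tfrac{\alpha_t}{\alpha_{t-1}}-1\bigr|\cdot\norm{\text{previous displacement}}$, and then close the induction as if this term were $O(\gamma D)/(t+\gamma)^2$.

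But the previous displacement is $\alpha_{t-1}(F(z_{t-1})+c_t)=\beta_{t-1}(z_0-z_{t-1})-d_{t-1}$, with $d_{t-1}:=z_t-z_{t-1}$. Its a priori bound is therefore $(\gamma D+E)/(t-1+\gamma)$. So the error contains a piece $\approx E/(2s^2)$, with $s:=t+\gamma$, which is of the same order as the induction's slack. In addition, the Lipschitz term raises $1-\beta_t$ to $\sqrt{(1-\beta_t)^2+\alpha_t^2L^2}\approx 1-\beta_t+\tfrac{1}{2s}$, even after you use monotonicity of $F$.

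Your effective coefficient on $\norm{d_{t-1}}$ is therefore about $1-(\gamma-1)/s$. Propagating $\norm{d_{t-1}}\le E/(s-1)$ to $\norm{d_t}\le E/s$ needs a coefficient strictly below $(s-1)/s$. At $\gamma=2$ this fails at every step and for every $E$:
\begin{itemize}
\item at $t=1$ the coefficient is $\sqrt{1/9+1/3}+1-\sqrt{2/3}\approx 0.85>2/3$;
\item for large $s$ it equals $1-\tfrac1s+\tfrac1{s^2}+O(s^{-3})>1-\tfrac1s$.
\end{itemize}
Your closing condition ``$(\gamma-1)E\gtrsim\gamma D$'' drops exactly these $E$-proportional losses. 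As a result, the argument does not cover the stated range $\gamma\ge 2$.

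\textbf{The fix.} Keep the step-size ratio inside the coefficient of $d_{t-1}$. Substitute $\alpha_t(F(z_{t-1})+c_t)=\tfrac{\alpha_t}{\alpha_{t-1}}\bigl(\beta_{t-1}(z_0-z_{t-1})-d_{t-1}\bigr)$ before taking norms. This gives
\[
\norm{d_t}\le\norm{\lambda d_{t-1}-\alpha_t\bigl(F(z_t)-F(z_{t-1})\bigr)}+|\varepsilon|\,\norm{z_0-z_{t-1}},\qquad\lambda=\tfrac{\alpha_t}{\alpha_{t-1}}-\beta_t,\quad\varepsilon=\beta_t-\tfrac{\alpha_t}{\alpha_{t-1}}\beta_{t-1}.
\]
Now the shrinking step size enters with the helpful sign, $\lambda\le 1-(\gamma+\tfrac12)/s$. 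This cancels, to first order, the $+\tfrac{1}{2s}$ coming from the Lipschitz term. Monotonicity of $F$ then gives the coefficient $\sqrt{\lambda^2+\alpha_t^2L^2}\le 1-\tfrac{9}{8s}$ for $\gamma\ge 2$. At the same time, the first-order anchor terms cancel, leaving $|\varepsilon|\le\gamma/(t-1+\gamma)^2$, which multiplies only $D$. With $E\ge 12\gamma D$ the induction then closes.

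\textbf{A secondary issue in Step (a).} This step is only sketched. Applying monotonicity of $F$ at $z_{t+1}$ creates a cross term $2\alpha_tL\norm{z_{t+1}-z_t}\norm{z_{t+1}-z^\star}$ that is hard to absorb at small $t$ when $\gamma=2$. These errors are not summable: $\alpha_t^2L^2/\beta_t=1/\gamma$ is constant, so they must be beaten by an explicit inequality.

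Apply monotonicity at $z_t$ instead. Set $u_t=z_t-z^\star$, $v=z_0-z^\star$, $g_t=F(z_t)-F(z^\star)$. Nonexpansiveness of $J_{\alpha_tA}$ against $z^\star=J_{\alpha_tA}(z^\star-\alpha_tF(z^\star))$ gives $\norm{u_{t+1}}\le\norm{(1-\beta_t)u_t+\beta_tv-\alpha_tg_t}$. Expanding, with Young's inequality and convexity of the squared norm, yields
\[
\norm{u_{t+1}}^2\le\bigl(1-\beta_t+\tfrac32\alpha_t^2L^2\bigr)\norm{u_t}^2+(\beta_t+2\beta_t^2)\norm{v}^2 .
\]
This preserves $\norm{u_t}^2\le 12\norm{v}^2$, because $-11\beta_t+18\alpha_t^2L^2+2\beta_t^2\le 0$ for $\gamma\ge 2$.
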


The proof follows the same three-step template as the unconstrained ($A = 0$) proof of \citet{surina2026anchored}: first bound the iterates, then bound consecutive differences, and finally convert that estimate into a last-iterate residual bound. The new ingredient is handling the composite term $A$ using the vectors $\{c_t\in A(z_t)\}$ and properties of the resolvent operator.

\section{Proof of the Main Theorem}
We fix a solution $z^\star\in S$ throughout. Choose $a^\star\in A(z^\star)$ such that $F(z^\star)+a^\star=0$.

\subsection{Step 1: Bounded Iterates}
\begin{lemma}\label{lem:bounded-iterates}
For all $t\ge 0$,
\[
\norm{z_t-z^\star}^2\le 12H_0^2.
\]
Consequently,
\[
\norm{z_t-z_0}\le D
\qquad\text{for all }t\ge 0.
\]
\end{lemma}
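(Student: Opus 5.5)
The plan is to prove the bound by induction on $t$ via a one-step contraction toward $z^\star$. The estimate uses only nonexpansiveness of the resolvent, monotonicity and Lipschitzness of $F$, and the anchoring weight $\beta_t$. Write $r_t:=\norm{z_t-z^\star}$ and $w_t:=(1-\beta_t)z_t+\beta_tz_0-\alpha_tF(z_t)$, so that $z_{t+1}=J_{\alpha_tA}(w_t)$. Since $-F(z^\star)=a^\star\in A(z^\star)$, Proposition~\ref{prop:resolvent}(ii) gives $z^\star=J_{\alpha_tA}\bigl(z^\star-\alpha_tF(z^\star)\bigr)$ for every $t$. Proposition~\ref{prop:resolvent}(i) then yields
\[
r_{t+1}\le \norm{(1-\beta_t)(z_t-z^\star)-\alpha_t\bigl(F(z_t)-F(z^\star)\bigr)+\beta_t(z_0-z^\star)}
\le \norm{(1-\beta_t)(z_t-z^\star)-\alpha_t\bigl(F(z_t)-F(z^\star)\bigr)}+\beta_tH_0 .
\]
I will expand the first norm squared. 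By monotonicity of $F$ the cross term $-2(1-\beta_t)\alpha_t\ip{F(z_t)-F(z^\star)}{z_t-z^\star}$ is nonpositive, and by Lipschitzness the last term is at most $\alpha_t^2L^2r_t^2$. This gives the scalar recursion
\[
r_{t+1}\le \rho_t\, r_t+\beta_tH_0,\qquad \rho_t^2:=(1-\beta_t)^2+\alpha_t^2L^2=\frac{t^2+t+\gamma}{(t+\gamma)^2}.
\]

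The base cases come first. Trivially $r_0=H_0\le\sqrt{12}H_0$. At $t=0$ we have $\beta_0=1$, so $w_0=z_0-\alpha_0F(z_0)$ and the bound reads $r_1\le \alpha_0L\,r_0+H_0=(1+1/\sqrt{\gamma})H_0\le\sqrt{12}H_0$. For the inductive step with $t\ge1$, assume $r_t\le R:=\sqrt{12}H_0$. It then suffices to show $(1-\rho_t)R\ge \beta_tH_0$, i.e.
\[
\sqrt{12}\,\bigl(t+\gamma-\sqrt{t^2+t+\gamma}\bigr)\ \ge\ \gamma .
\]
Rationalizing, $t+\gamma-\sqrt{t^2+t+\gamma}=\frac{(2\gamma-1)t+\gamma(\gamma-1)}{t+\gamma+\sqrt{t^2+t+\gamma}}$. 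I will bound the denominator by $2t+\gamma+1$ or a similar quantity that is sharper than $2(t+\gamma)$, and use $\gamma\ge2$ and $t\ge1$ to check the inequality.

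I expect this elementary inequality to be the main obstacle, because the constant $12$ is presumably tuned to it. The crude denominator bound $2(t+\gamma)$ loses too much at small $t$. If the direct norm recursion turns out too lossy, I will instead work with the squared recursion
\[
r_{t+1}^2\le (1+\epsilon_t)\rho_t^2r_t^2+(1+1/\epsilon_t)\beta_t^2H_0^2
\]
and choose $\epsilon_t$ proportional to $\beta_t$. The invariance condition then becomes a polynomial inequality in $t$ and $\gamma$, which can be verified for $t\ge1$ and $\gamma\ge 2$; this is likely the form the stated constant $12$ comes from.

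Finally, the consequence follows from the triangle inequality:
\[
\norm{z_t-z_0}\le \norm{z_t-z^\star}+\norm{z^\star-z_0}\le \sqrt{12}H_0+H_0=D .
\]
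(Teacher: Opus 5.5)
Your proof is correct, but it takes a different route from the paper's.

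The paper works with squared norms throughout:
\begin{itemize}
\item it bounds $\norm{(1-\beta_t)u_t+\beta_tv}^2$ by convexity and drops $\ip{u_t}{g_t}$ by monotonicity;
\item it absorbs the anchor--operator cross term $-2\alpha_t\beta_t\ip{v}{g_t}$ with a Young-type inequality, giving $\norm{u_{t+1}}^2\le(1-\beta_t+\tfrac32\alpha_t^2L^2)\norm{u_t}^2+(\beta_t+2\beta_t^2)H_0^2$;
\item the constant $12$ is chosen so that the induction reduces to $(18-11\gamma)t+\gamma(18-9\gamma)\le 0$ for all $t\ge0$, with no separate $t=0$ step.
\end{itemize}

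You instead split off $\beta_t(z_0-z^\star)$ by the triangle inequality before squaring. That cross term then never appears, and only the monotone-plus-Lipschitz contraction $\sqrt{(1-\beta_t)^2+\alpha_t^2L^2}$ remains. This is the same kind of estimate the paper uses later for $q_t$ in Lemma~\ref{lem:d-recurrence}. Your linear recursion is shorter and sharper than the paper's.

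The step you feared is easy. The map $t\mapsto t+\gamma-\sqrt{t^2+t+\gamma}$ is nondecreasing, since its derivative $1-\frac{2t+1}{2\sqrt{t^2+t+\gamma}}$ is nonnegative once $4\gamma\ge1$. So for $t\ge1$ it suffices to check $t=1$, where $1+\gamma-\sqrt{2+\gamma}\ge\gamma/2$ holds for $\gamma\ge2$. Hence the $\epsilon_t$ fallback is unnecessary. Together with your separate $t=0$ step, even the radius $2H_0$ is invariant, which beats $\sqrt{12}H_0$.

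One caution concerns the denominator bound you suggested, $t+\gamma+\sqrt{t^2+t+\gamma}\le 2t+\gamma+1$. It holds only when $\gamma\le t+1$, so it fails for small $t$ and large $\gamma$. The crude bound $2(t+\gamma)$, which you dismissed, in fact suffices for every $t\ge1$. The bound $\sqrt{t^2+t+\gamma}\le t+\sqrt{\gamma}$ also works, for all $t\ge0$.
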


\begin{proof}
Since $F(z^\star)+a^\star=0$, Proposition~\ref{prop:resolvent}(ii) implies
\[
z^\star=J_{\alpha_tA}\bigl(z^\star-\alpha_tF(z^\star)\bigr)
\qquad \forall t\ge 0.
\]
Let
\[
u_t:=z_t-z^\star,
\qquad
v:=z_0-z^\star,
\qquad
g_t:=F(z_t)-F(z^\star),
\qquad
w_t:=(1-\beta_t)u_t+\beta_tv.
\]
By the nonexpansiveness of $J_{\alpha_tA}$,
\[
\norm{u_{t+1}}\le \norm{w_t-\alpha_tg_t}.
\]
Squaring both sides gives
\[
\norm{u_{t+1}}^2
\le
\norm{w_t}^2-2\alpha_t\ip{w_t}{g_t}+\alpha_t^2\norm{g_t}^2.
\]
Now
\[
-2\alpha_t\ip{w_t}{g_t}
=
-2\alpha_t(1-\beta_t)\ip{u_t}{g_t}-2\alpha_t\beta_t\ip{v}{g_t}.
\]
By monotonicity of $F$,
\[
\ip{u_t}{g_t}=\ip{z_t-z^\star}{F(z_t)-F(z^\star)}\ge 0,
\]
so the first term is nonpositive. For the second term, use
\[
0\le \norm{2\beta_tv+\alpha_tg_t}^2
=4\beta_t^2\norm{v}^2+4\alpha_t\beta_t\ip{v}{g_t}+\alpha_t^2\norm{g_t}^2,
\]
which implies
\[
-2\alpha_t\beta_t\ip{v}{g_t}
\le 2\beta_t^2\norm{v}^2+\frac12\alpha_t^2\norm{g_t}^2.
\]
Also, by the convexity of the squared norm,
\[
\norm{w_t}^2\le (1-\beta_t)\norm{u_t}^2+\beta_t\norm{v}^2.
\]
Combining these estimates and using $\norm{g_t}\le L\norm{u_t}$ gives
\begin{equation}\label{eq:bound-u-recurrence}
\norm{u_{t+1}}^2
\le
\bigl(1-\beta_t+\tfrac32\alpha_t^2L^2\bigr)\norm{u_t}^2
+
\bigl(\beta_t+2\beta_t^2\bigr)\norm{v}^2.
\end{equation}
We now prove by induction that $\norm{u_t}^2\le 12\norm{v}^2=12H_0^2$ for all $t\ge 0$. The claim is obvious at $t=0$. Assuming it holds at time $t$, \eqref{eq:bound-u-recurrence} yields
\[
\norm{u_{t+1}}^2
\le
\Bigl(12-11\beta_t+18\alpha_t^2L^2+2\beta_t^2\Bigr)H_0^2.
\]
Because
\[
\alpha_t^2L^2=\frac{1}{t+\gamma},
\qquad
\beta_t=\frac{\gamma}{t+\gamma},
\]
we obtain
\[
-11\beta_t+18\alpha_t^2L^2+2\beta_t^2
=
\frac{(18-11\gamma)t+\gamma(18-9\gamma)}{(t+\gamma)^2}
\le 0
\]
for every $\gamma\ge 2$. Hence $\norm{u_{t+1}}^2\le 12H_0^2$, completing the induction.

The second claim follows from the triangle inequality:
\[
\norm{z_t-z_0}\le \norm{z_t-z^\star}+\norm{z^\star-z_0}\le (\sqrt{12}+1)H_0=D.\qedhere
\]
\end{proof}

\subsection{Step 2: One-Step Recursion and Stability of Iterates}
Define the consecutive differences
\[
d_t:=z_{t+1}-z_t,
\qquad t\ge 0.
\]
From \eqref{eq:def-c}, the algorithm can be rewritten as
\begin{equation}\label{eq:update-c}
d_t=\beta_t(z_0-z_t)-\alpha_t\bigl(F(z_t)+c_{t+1}\bigr).
\end{equation}
This is the exact composite analogue of the unconstrained Anchored GDA recursion.

\begin{lemma}\label{lem:d-recurrence}
For every $t\ge 0$,
\[
\norm{d_{t+1}}\le q_t\norm{d_t}+|\varepsilon_t|D,
\]
where
\[
\lambda_t:=\frac{\alpha_{t+1}}{\alpha_t}-\beta_{t+1},
\qquad
\varepsilon_t:=\beta_{t+1}-\frac{\alpha_{t+1}}{\alpha_t}\beta_t,
\qquad
q_t:=\sqrt{\lambda_t^2+\alpha_{t+1}^2L^2}.
\]
\end{lemma}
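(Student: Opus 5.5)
The plan is to mimic the unconstrained argument and use the resolvent twice: once to produce $z_{t+2}$, once to re-express $z_{t+1}$ as a resolvent evaluated at the \emph{same} stepsize $\alpha_{t+1}$. Then nonexpansiveness reduces everything to the unconstrained-style algebra.

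\textbf{Step 1: apply nonexpansiveness.} Since $c_{t+1}\in A(z_{t+1})$, Proposition~\ref{prop:resolvent}(ii) gives
\[
z_{t+1}=J_{\alpha_{t+1}A}\bigl(z_{t+1}+\alpha_{t+1}c_{t+1}\bigr).
\]
By definition,
\[
z_{t+2}=J_{\alpha_{t+1}A}\bigl(z_{t+1}+\beta_{t+1}(z_0-z_{t+1})-\alpha_{t+1}F(z_{t+1})\bigr).
\]
Nonexpansiveness (Proposition~\ref{prop:resolvent}(i)) then yields
\[
\norm{d_{t+1}}\le\bigl\|\beta_{t+1}(z_0-z_{t+1})-\alpha_{t+1}\bigl(F(z_{t+1})+c_{t+1}\bigr)\bigr\|.
\]

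\textbf{Step 2: eliminate $c_{t+1}$.} Use \eqref{eq:update-c} at time $t$, which reads $\alpha_t(F(z_t)+c_{t+1})=\beta_t(z_0-z_t)-d_t$. Multiplying by $\alpha_{t+1}/\alpha_t$ gives
\[
\alpha_{t+1}c_{t+1}=\tfrac{\alpha_{t+1}}{\alpha_t}\bigl(\beta_t(z_0-z_t)-d_t\bigr)-\alpha_{t+1}F(z_t).
\]
Substitute this into the bound from Step 1 and write $z_0-z_{t+1}=(z_0-z_t)-d_t$. Collecting terms, the vector inside the norm becomes
\[
\lambda_t d_t-\alpha_{t+1}\bigl(F(z_{t+1})-F(z_t)\bigr)+\varepsilon_t(z_0-z_t),
\]
with $\lambda_t,\varepsilon_t$ exactly as in the statement. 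By the triangle inequality and Lemma~\ref{lem:bounded-iterates} ($\norm{z_0-z_t}\le D$), the last term contributes at most $|\varepsilon_t|D$.

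\textbf{Step 3: bound the remaining term.} Set $g:=F(z_{t+1})-F(z_t)$. Expanding the square,
\[
\norm{\lambda_td_t-\alpha_{t+1}g}^2=\lambda_t^2\norm{d_t}^2-2\lambda_t\alpha_{t+1}\ip{d_t}{g}+\alpha_{t+1}^2\norm{g}^2.
\]
Monotonicity of $F$ gives $\ip{d_t}{g}\ge0$, and Lipschitzness gives $\norm{g}\le L\norm{d_t}$. Provided $\lambda_t\ge0$, this yields $\norm{\lambda_td_t-\alpha_{t+1}g}\le q_t\norm{d_t}$, which proves the lemma.

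\textbf{Main obstacle: the sign of $\lambda_t$.} The only real point to check is $\lambda_t\ge0$. With $x=t+\gamma\ge\gamma$, we need
\[
\sqrt{\frac{x}{x+1}}\ge\frac{\gamma}{x+1},
\quad\text{i.e.}\quad
\sqrt{x(x+1)}\ge\gamma.
\]
This holds because $\sqrt{x(x+1)}\ge x\ge\gamma$.
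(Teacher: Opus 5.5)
Your proof is correct and takes essentially the same route as the paper. Both rewrite $z_{t+1}=J_{\alpha_{t+1}A}(z_{t+1}+\alpha_{t+1}c_{t+1})$ and apply nonexpansiveness. Both then rearrange via \eqref{eq:update-c} into $\lambda_t d_t-\alpha_{t+1}(F(z_{t+1})-F(z_t))+\varepsilon_t(z_0-z_t)$, and bound this using monotonicity and Lipschitzness once $\lambda_t\ge 0$ is checked.
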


\begin{proof}
Set
\[
a_t:=(1-\beta_t)z_t+\beta_tz_0-\alpha_tF(z_t),
\]
so that $z_{t+1}=J_{\alpha_tA}(a_t)$ and, by \eqref{eq:def-c},
\[
a_t=z_{t+1}+\alpha_tc_{t+1}.
\]
Since $c_{t+1}\in A(z_{t+1})$, Proposition~\ref{prop:resolvent}(ii) also gives
\[
z_{t+1}=J_{\alpha_{t+1}A}\bigl(z_{t+1}+\alpha_{t+1}c_{t+1}\bigr).
\]
Using the nonexpansiveness of $J_{\alpha_{t+1}A}$,
\[
\norm{d_{t+1}}
=\norm{J_{\alpha_{t+1}A}(a_{t+1})-J_{\alpha_{t+1}A}(z_{t+1}+\alpha_{t+1}c_{t+1})}
\le \norm{a_{t+1}-z_{t+1}-\alpha_{t+1}c_{t+1}}.
\]
We now rewrite the right-hand side. Starting from the definition of $a_{t+1}$ and using \eqref{eq:update-c},
\begin{align*}
a_{t+1}-z_{t+1}-\alpha_{t+1}c_{t+1}
&=\beta_{t+1}(z_0-z_{t+1})-\alpha_{t+1}\bigl(F(z_{t+1})+c_{t+1}\bigr)\\
&=\beta_{t+1}(z_0-z_t-d_t)-\alpha_{t+1}\bigl(F(z_{t+1})-F(z_t)\bigr)
-\alpha_{t+1}\bigl(F(z_t)+c_{t+1}\bigr)\\
&=\Bigl(\frac{\alpha_{t+1}}{\alpha_t}-\beta_{t+1}\Bigr)d_t
-\alpha_{t+1}\bigl(F(z_{t+1})-F(z_t)\bigr)
+\Bigl(\beta_{t+1}-\frac{\alpha_{t+1}}{\alpha_t}\beta_t\Bigr)(z_0-z_t)\\
&=\lambda_td_t-\alpha_{t+1}\bigl(F(z_{t+1})-F(z_t)\bigr)+\varepsilon_t(z_0-z_t).
\end{align*}
Therefore,
\[
\norm{d_{t+1}}
\le
\norm{\lambda_td_t-\alpha_{t+1}(F(z_{t+1})-F(z_t))}+|\varepsilon_t|\norm{z_0-z_t}.
\]
By Lemma~\ref{lem:bounded-iterates}, $\norm{z_0-z_t}\le D$. Also,
\[
\frac{\alpha_{t+1}}{\alpha_t}
=
\sqrt{\frac{t+\gamma}{t+1+\gamma}}
\ge
\frac{t+\gamma}{t+1+\gamma}
\ge
\frac{\gamma}{t+1+\gamma}
=
\beta_{t+1},
\]
so $\lambda_t\ge 0$. Since $F$ is monotone,
\[
\ip{F(z_{t+1})-F(z_t)}{d_t}\ge 0.
\]
Hence
\begin{align*}
&\norm{\lambda_td_t-\alpha_{t+1}(F(z_{t+1})-F(z_t))}^2\\
&=\lambda_t^2\norm{d_t}^2-2\lambda_t\alpha_{t+1}\ip{F(z_{t+1})-F(z_t)}{d_t}+\alpha_{t+1}^2\norm{F(z_{t+1})-F(z_t)}^2\\
&\le \bigl(\lambda_t^2+\alpha_{t+1}^2L^2\bigr)\norm{d_t}^2
=q_t^2\norm{d_t}^2.
\end{align*}
Taking square roots proves the claim.
\end{proof}

\begin{lemma}\label{lem:scalar-bounds}
For every $t\ge 0$,
\[
q_t\le 1-\frac{9}{8(t+1+\gamma)},
\qquad
|\varepsilon_t|\le \frac{\gamma}{(t+\gamma)^2}.
\]
\end{lemma}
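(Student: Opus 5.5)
The plan is a direct scalar computation in the single variable $s:=t+\gamma\ge \gamma\ge 2$, with $n:=s+1\ge 3$. By \eqref{eq:schedule}, $\alpha_{t+1}/\alpha_t=\sqrt{s/n}=:r$, $\beta_t=\gamma/s$, $\beta_{t+1}=\gamma/n$ and $\alpha_{t+1}^2L^2=1/n$. Expanding the definitions in Lemma~\ref{lem:d-recurrence} gives
\[
q_t^2=\Bigl(r-\frac{\gamma}{n}\Bigr)^2+\frac1n=\frac{s}{n}+\frac1n-\frac{2\gamma r}{n}+\frac{\gamma^2}{n^2}=1-\frac{2\gamma r}{n}+\frac{\gamma^2}{n^2}.
\]
The constraint tying $\gamma$ to $n$ is $\gamma\le s=n-1$, since $t\ge 0$.

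\textbf{Bound on $q_t$.} Since $1-\tfrac{9}{8n}>0$ for $n\ge 3$, it suffices to compare squares. So I would show
\[
q_t^2\le \Bigl(1-\frac{9}{8n}\Bigr)^2=1-\frac{9}{4n}+\frac{81}{64n^2}.
\]
After multiplying by $n$, this is equivalent to
\[
f(\gamma):=2\gamma r-\frac{\gamma^2}{n}\ \ge\ \frac94-\frac{81}{64n}.
\]
For fixed $n$, $f$ is concave in $\gamma$. Hence on the admissible interval $\gamma\in[2,n-1]$ it suffices to check the two endpoints.

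\emph{Endpoint $\gamma=2$.} The inequality becomes $4r\ge \tfrac94+\bigl(4-\tfrac{81}{64}\bigr)\tfrac1n$, where $r=\sqrt{(n-1)/n}$. The left side increases in $n$ and the right side decreases in $n$, so only $n=3$ needs checking. There $4\sqrt{2/3}\approx 3.27$ and the right side is $\approx 3.16$, so it holds.

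\emph{Endpoint $\gamma=n-1$ (the case $t=0$).} Writing $\rho:=r=\sqrt{(n-1)/n}$, we have $f=(n-1)\rho(2-\rho)$. This is increasing in $n$, while the target increases only to $9/4$. At $n=3$ we get $f\approx 1.93$ against a target of $\approx 1.83$. For $n\ge 4$, already $f\ge 3\cdot 0.866\cdot 1.13>9/4$.

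This endpoint check is where the constant $9/8$ is tight-ish, so it is the main obstacle. One must not crudely bound $\gamma^2/n$ by $\gamma$, since that loses too much; the concavity reduction is what makes it work.

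\textbf{Bound on $\varepsilon_t$.} We have
\[
\varepsilon_t=\frac{\gamma}{n}-\frac{r\gamma}{s}=\frac{\gamma\bigl(s-\sqrt{s(s+1)}\bigr)}{s(s+1)}.
\]
Then
\[
\bigl|s-\sqrt{s(s+1)}\bigr|=\frac{s}{s+\sqrt{s(s+1)}}\le \frac12,
\]
which gives
\[
|\varepsilon_t|\le \frac{\gamma}{2s(s+1)}\le \frac{\gamma}{s^2}=\frac{\gamma}{(t+\gamma)^2},
\]
as claimed.
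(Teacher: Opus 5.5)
\textbf{Gap in the endpoint $\gamma=n-1$.} Your route for $q_t$ is sound: the exact identity $q_t^2=1-\frac{2\gamma r}{n}+\frac{\gamma^2}{n^2}$, then concavity of $f$ in $\gamma$ on $[2,n-1]$, then two endpoint checks. The $\gamma=2$ endpoint is handled correctly for all real $n\ge 3$. The $\gamma=n-1$ endpoint, however, is incomplete.

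Because $\gamma\ge 2$ is an arbitrary real, $n=\gamma+1$ ranges over all reals $n\ge 3$, not just integers. You check only $n=3$ and $n\ge 4$. Since the target $\frac94-\frac{81}{64n}$ also increases in $n$, monotonicity of $f$ alone does not cover $3<n<4$. The obvious patch fails narrowly: $f(3)=2\bigl(1-(1-\sqrt{2/3})^2\bigr)\approx 1.9327$, while the target tends to $\frac94-\frac{81}{256}\approx 1.9336$ as $n\to 4$.

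The fix is to move the split point. For $n\in[3,3.5]$ the target is at most $\frac94-\frac{81}{224}\approx 1.888<f(3)\le f(n)$. For $n\ge 3.5$ you have $f(n)\ge f(3.5)=2.5\bigl(1-(1-\sqrt{5/7})^2\bigr)\approx 2.44>\frac94$. With this change the $q_t$ bound is complete. Your $\varepsilon_t$ bound is essentially identical to the paper's.

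\textbf{Comparison with the paper.} The paper uses $s=t+1+\gamma$ (your $n$) and linearizes with $\sqrt{1-x}\le 1-\frac{x}{2}$, giving $\lambda_t\le 1-\frac{\gamma+1/2}{s}$. It then squares this using $\lambda_t\ge 0$ from the proof of Lemma~\ref{lem:d-recurrence}. After that, the numerator of $q_t^2-\bigl(1-\frac{9}{8s}\bigr)^2$ is linear in $s$ with slope $\frac94-2\gamma<0$. Evaluating at the smallest value $s=\gamma+1$ leaves the polynomial inequality $-64\gamma^2+80\gamma+79\le 0$, which holds for $\gamma\ge 2$.

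So the paper fixes $\gamma$ and reduces in $s$, ending in an exact polynomial check with no numerics or case splits. You fix $n$ and reduce in $\gamma$. Your version keeps the exact square root, so it loses nothing to linearization and is self-contained (it does not need $\lambda_t\ge 0$). The price is numerical evaluation of square roots and the real-range bookkeeping in $n$, which is exactly where the gap occurred.
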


\begin{proof}
Set $s:=t+1+\gamma$. Then
\[
\lambda_t=\sqrt{1-\frac{1}{s}}-\frac{\gamma}{s}.
\]
Using $\sqrt{1-x}\le 1-\frac{x}{2}$ for $x\in [0,1]$, we get
\[
\lambda_t\le 1-\frac{\gamma+1/2}{s}.
\]
Therefore
\[
q_t^2\le \left(1-\frac{\gamma+1/2}{s}\right)^2+\frac{1}{s}.
\]
We compare this with $\left(1-\frac{9}{8s}\right)^2$:
\begin{align*}
\left(1-\frac{\gamma+1/2}{s}\right)^2+\frac{1}{s}-\left(1-\frac{9}{8s}\right)^2
&=
\frac{\left(\frac94-2\gamma\right)s+\left(\gamma+\frac12\right)^2-\frac{81}{64}}{s^2}\\
& \le\InParentheses{\left(\frac94-2\gamma\right)(\gamma+1)+\left(\gamma+\frac12\right)^2-\frac{81}{64}}/s^2  \tag{$s\ge \gamma+1$} \\
& = \frac{-64\gamma^2+80\gamma+79}{64 s^2} \\
&\le 0. \tag{$\gamma \ge 2$}
\end{align*}
Hence $q_t^2\le \left(1-\frac{9}{8s}\right)^2$, proving the first inequality.

For the second bound, let $r:=t+\gamma$. Then
\[
\varepsilon_t
=
\frac{\gamma}{r+1}-\frac{\gamma}{r}\sqrt{\frac{r}{r+1}}
=
\frac{\gamma}{r+1}-\frac{\gamma}{\sqrt{r(r+1)}}.
\]
Thus
\[
|\varepsilon_t|
=
\frac{\gamma}{\sqrt{r}(r+1)(\sqrt{r+1}+\sqrt{r})}
\le
\frac{\gamma}{2r(r+1)}
\le
\frac{\gamma}{r^2}
=
\frac{\gamma}{(t+\gamma)^2}. \qedhere
\]
\end{proof}

\begin{proposition}\label{prop:d-decay}
For every $t\ge 0$,
\[
\norm{d_t}\le \frac{E}{t+\gamma}.
\]
\end{proposition}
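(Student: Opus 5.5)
We prove the bound by induction on $t$, using the one-step recursion of Lemma~\ref{lem:d-recurrence} together with the scalar estimates of Lemma~\ref{lem:scalar-bounds}.

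\textbf{Base case.} At $t=0$ the claim reads $\norm{d_0}=\norm{z_1-z_0}\le E/\gamma$. This holds by definition, since $E\ge \gamma\norm{z_1-z_0}$.

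\textbf{Inductive step.} Assume $\norm{d_t}\le E/(t+\gamma)$ and write $s:=t+1+\gamma$. Lemma~\ref{lem:d-recurrence} and Lemma~\ref{lem:scalar-bounds} give
\[
\norm{d_{t+1}}\le \Bigl(1-\frac{9}{8s}\Bigr)\frac{E}{s-1}+\frac{\gamma D}{(s-1)^2}.
\]
It then suffices to show that the right-hand side is at most $E/s$. Multiplying through by $s(s-1)^2$, the target becomes
\[
\Bigl(s-\tfrac98\Bigr)(s-1)E+\gamma D s\le E(s-1)^2 .
\]
Expanding the difference, this is equivalent to
\[
\gamma D s\le \tfrac18 E(s-1).
\]
Since $s\ge \gamma+1\ge 3$, we have $s/(s-1)\le 3/2$. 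The condition therefore holds as soon as $E\ge 12\gamma D$, which is guaranteed by the definition of $E$. This is exactly why the constant $12$ appears in $E$.

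\textbf{Main obstacle.} The only delicate point is this bookkeeping. The contraction factor $1-\frac{9}{8s}$ must beat the loss incurred in passing from $1/(s-1)$ to $1/s$, and the margin that remains, $\frac{1}{8s}$, must absorb the $O(\gamma D/s^2)$ anchoring error. Because the contraction constant $9/8$ exceeds $1$, there is a surplus of order $E/(8s^2)$. That surplus dominates $\gamma D/(s-1)^2$ provided $E$ is a large enough multiple of $\gamma D$, so the induction closes. No further structure of $A$ is needed, since the composite term was already absorbed in Lemma~\ref{lem:d-recurrence}.
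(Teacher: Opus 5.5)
Your proof is correct and follows essentially the same route as the paper. Both argue by induction: the base case comes from the definition of $E$, and the inductive step combines Lemmas~\ref{lem:d-recurrence} and~\ref{lem:scalar-bounds}, with the surplus $E/(8s(s-1))$ absorbing the anchoring error $\gamma D/(s-1)^2$ precisely because $E\ge 12\gamma D$ and $s\ge 3$. The only difference is cosmetic: you clear denominators, while the paper writes the bound as $E/(r+1)-E/(8r(r+1))+\gamma D/r^2$ with $r=s-1$.
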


\begin{proof}
We argue by induction on $t$. The claim is true at $t=0$ by the definition of $E$. Assume it holds at time $t$. Let $r:=t+\gamma$. By Lemmas~\ref{lem:d-recurrence} and~\ref{lem:scalar-bounds},
\[
\norm{d_{t+1}}
\le
\left(1-\frac{9}{8(r+1)}\right)\frac{E}{r}+\frac{\gamma D}{r^2}
=
\frac{E}{r+1}-\frac{E}{8r(r+1)}+\frac{\gamma D}{r^2}.
\]
Since $E\ge 12\gamma D$ and $r\ge \gamma\ge 2$,
\[
\frac{\gamma D}{r^2}
\le
\frac{E}{12r^2}
\le
\frac{E}{8r(r+1)}.
\]
Therefore $\norm{d_{t+1}}\le \dfrac{E}{r+1}=\dfrac{E}{t+1+\gamma}$, which completes the induction.
\end{proof}

\subsection{Step 3: From Iterate Stability to Residual}
\begin{proof}[Proof of Theorem~\ref{thm:main}]
Fix $T\ge 1$. From \eqref{eq:update-c} with $t=T-1$,
\[
d_{T-1}=\beta_{T-1}(z_0-z_{T-1})-\alpha_{T-1}\bigl(F(z_{T-1})+c_T\bigr).
\]
Rearranging gives
\[
F(z_{T-1})+c_T
=
\frac{\beta_{T-1}}{\alpha_{T-1}}(z_0-z_{T-1})-\frac{1}{\alpha_{T-1}}d_{T-1}.
\]
Hence
\begin{align*}
\norm{F(z_T)+c_T}
&\le \norm{F(z_T)-F(z_{T-1})}+\norm{F(z_{T-1})+c_T}\\
&\le L\norm{d_{T-1}}+\frac{1}{\alpha_{T-1}}\norm{d_{T-1}}+\frac{\beta_{T-1}}{\alpha_{T-1}}\norm{z_0-z_{T-1}}.
\end{align*}
Using \eqref{eq:schedule}, Proposition~\ref{prop:d-decay}, and Lemma~\ref{lem:bounded-iterates}, we obtain
\begin{align*}
\norm{F(z_T)+c_T}
&\le L\frac{E}{T-1+\gamma}
+L\sqrt{T-1+\gamma}\cdot \frac{E}{T-1+\gamma}
+\frac{L\gamma}{\sqrt{T-1+\gamma}}\,D\\
&\le \frac{LE}{T-1+\gamma}+\frac{LE}{\sqrt{T-1+\gamma}}+\frac{L\gamma D}{\sqrt{T-1+\gamma}}\\
&\le \frac{L(2E+\gamma D)}{\sqrt{T-1+\gamma}},
\end{align*}
where we used $T-1+\gamma\ge 1$ in the last step.

Since $c_T\in A(z_T)$,
\[
\rtan_{F,A}(z_T)
=\min_{c\in A(z_T)}\norm{F(z_T)+c}
\le \norm{F(z_T)+c_T}
\le \frac{L(2E+\gamma D)}{\sqrt{T-1+\gamma}}.
\]
This proves \eqref{eq:main-bound}. Finally, Lemma~\ref{lem:bounded-iterates} gives
\[
\norm{z_1-z_0}
\le D,
\]
so one may take $E=12\gamma D$. This yields \eqref{eq:main-explicit}. The bound for the natural residual follows from Proposition~\ref{prop:nat-tan}.
\end{proof}

\printbibliography

\appendix

\crefalias{section}{appendix} 

\newpage

\end{document}